\newcommand{\QQ}{\mathbb{Q}}
\newcommand{\ZZ}{\mathbb{Z}}
\newcommand{\CC}{\mathbb{C}}
\newcommand{\RR}{\mathbb{R}}
\newcommand{\PP}{\mathbb{P}}
\renewcommand{\AA}{\mathbb{A}}
\newcommand{\Gal}{\operatorname{Gal}}
\newcommand{\PSL}{\operatorname{PSL}}
\newcommand{\lcrit}[1]{\lambda_{\mathrm{crit}, #1}}
\newcommand{\hcrit}{h_{\mathrm{crit}}}
\newcommand{\lhat}{\hat{\lambda}}
\newcommand{\hhat}{\hat{h}}
\newcommand{\hnaive}[1]{h_{\mathrm{ #1}}}
\renewcommand{\epsilon}{\varepsilon}
\newtheorem{theorem}{Theorem}
\newtheorem{corollary}[theorem]{Corollary}
\newtheorem{lemma}[theorem]{Lemma}
\title{A finiteness result for post-critically finite polynomials}
\author{Patrick Ingram}
\address{Department of Pure Mathematics, University of Waterloo}
\email{pingram@math.uwaterloo.ca}
\thanks{The author would like to thank Rafe Jones and Joseph Silverman for useful conversations which contributed to these results, and the anonymous referees for their many helpful comments and corrections. This research is supported in part by a grant from NSERC of Canada.}
\begin{document}
\maketitle

\begin{abstract}
Let $\mathcal{P}_d$ denote the moduli space of polynomials of degree $d$, up to affine conjugacy. We show that the set of points in $\mathcal{P}_d(\CC)$ corresponding to post-critically finite polynomials is a set of algebraic points of bounded height.  It follows that for any $B$, the set of conjugacy classes of post-critically finite polynomials of degree $d$ with coefficients of algebraic degree at most $B$ is a finite and effectively computable set.  As an example, we exhibit a complete list of representatives of the conjugacy classes of monic post-critically finite cubic polynomials in $\QQ[z]$.  The proof of the main result comes down to finding a relation between the natural height on $\mathcal{P}_d$, and Silverman's critical height.
\end{abstract}

\section{Introduction}







%

%

%


Many of the dynamical properties of a polynomial $f(z)\in\CC[z]$ may be deduced from properties of the forward orbits of the critical points.  For example, the Julia set of $f$ is connected if and only if these orbits are all bounded in the complex plane.  One special case in which this happens is when the orbit of the critical point is in fact finite, and in general we will call any rational map  $f:\PP^1_\CC\to\PP^1_\CC$ \emph{post-critically finite} if and only if is has this property.

One broad class of examples of postcitically finite maps are the Latt\`{e}s maps, obtained by descending an endomorphism of a torus to the projective line.  Other than these examples, post-critically finite maps are relatively sparse.
Let $\mathcal{M}_d$ denote the moduli space of rational functions on $\PP^1_\CC$, up to $\PSL_2$-conjugacy.  A result of Thurston \cite{thurston} implies that the non-Latt\`{e}s post-critically finite maps are covered by a countable union of 0-dimensional subvarieties of $\mathcal{M}_d(\CC)$.  Since these varieties are all defined over $\QQ$, one sees immediately that Thurston's theorem ensures that every non-Latt\`{e}s post-critically finite map is defined over $\overline{\QQ}$, up to a change of variables.  But since $\mathcal{M}_d(\overline{\QQ})$ is a countable set, it is still \emph{prima facie} possible that every map defined over $\overline{\QQ}$ is post-critically finite. 

In the case of quadratic polynomials, we may show directly that any value $c\in\CC$ for which the corresponding polynomial $z^2+c$ is post-critically finite must be an algebraic integer (since there exist distinct $m, n\in\mathbb{N}$ such that $f^n(0)=f^m(0)$, and this translates into a non-trivial monic polynomial condition on $c$ over $\ZZ$).  What's more, these values of $c$, and all of their Galois conjugates, must be contained in the Mandelbrot set (the collection of $c\in\CC$ for which $z^2+c$ is post-critically \emph{bounded}), which in turn is contained in the disk of radius 2. Hence, each such value of $c$ has absolute logarithmic height at most $\log(2)$, and Northcott's Theorem then tells us that for any $B\geq 1$, the set of values $c\in\overline{\QQ}$ such that $z^2+c$ is post-critically finite and $[\QQ(c):\QQ]\leq B$, is a finite and effectively computable set.  More than simply being sparse in the complex setting, post-critically finite quadratic polynomials are fairly rare even within the realm of polynomials with algebraic coefficients.

The purpose of this note is to extend this observation to polynomials of arbitrary degree. 
At the Bellaires workshop of 2010, Silverman proposed a natural measure of the post-critical complexity of a rational map defined over $\overline{\QQ}$.  Letting $\hhat_f:\overline{\QQ}\to\RR$ denote the usual canonical height function, we  define the critical height of $f(z)\in\overline{\QQ}(z)$ by
\[\hcrit(f)=\sum_{P\in \PP^1(\overline{\QQ})}\left(e_P(f)-1\right)\hhat_f(P),\]
where $e_P(f)$ is the ramification index of $f$ at $P$.
Since $\hhat_f$ vanishes precisely on points which are preperiodic for $f$, and takes positive values otherwise, and since $e_P(F)>1$ if and only if $P$ is a critical point for $f$, it is immediate that $\hcrit(f)=0$ if and only if $f$ is post-critically finite.  This function is also invariant under change of coordinates, and so is well-defined as a function on $\mathcal{M}_d(\overline{\QQ})$.  It is possible, though, that this function is a height in name only; it has no immediately obvious relation to any height functions on $\mathcal{M}_d$ in the sense of Weil.  Our main result shows that, if we restrict attention to polynomials, such a relation does indeed exist.

 The case of quadratic polynomials is so straightforward in part because the moduli space is one-dimensional.  Indeed, it is not hard to show that a similar result holds for \emph{any} non-isotrivial one-parameter family whose generic fibre is not a Latt\`{e}s map, an observation which the author is certainly not the first to make.  Suppose that there exist a curve $C/\overline{\QQ}$, a  non-constant map $f:C\to\mathcal{M}_d$, and $t\in C(\overline{\QQ})$ of arbitrarily large height such that  the specialization $f_t$ is post-critically finite.  One may invoke the specialization theorem of Call and Silverman \cite{call-silv} to show, if $f_\eta$ is the generic fibre of the family, that
 \[\hcrit(f_\eta)=\lim_{h(t)\to\infty}\frac{\hcrit(f_t)}{h(t)}=0.\]
Since the family was assumed to be non-isotrivial, it follows from a result of Baker \cite{baker} that $\hcrit(f_\eta)=0$ only if the the generic fibre is post-critically finite.  But  Thurston's result mentioned above shows that any non-constant family $f:C\to \mathcal{M}_d$ of post-critically finite maps must be a family of Latt\`{e}s maps. 
   In the higher dimensional case, though, this argument fails because the results of \cite{call-silv} require the base to be a curve.  In the case of polynomials, we can use a more direct argument to avoid this heavy machinery.

We define the \emph{monic centred height} on $\mathcal{P}_d$, arguably the first height one would consider, as follows.  
First note that every polynomial with coefficients in $\overline{\QQ}$ is affine-conjugate to at least one polynomial of the form
\[z^d+a_{d-2}z^{d-2}+\cdots+a_0.\]
Although a given affine-conjugacy class might not be uniquely represented in this way, any two representatives are related by an affine conjugacy of the form $z\mapsto \zeta z$, for some $\zeta^{d-1}=1$.  In particular, the function
\[\hnaive{mc}(z^d+a_{d-2}z^{d-2}+\cdots+a_1z+a_0)=h(a_{d-2}, ...,  a_0),\]
is well-defined on conjugacy classes, where
\[h(a_{d-2}, ..., a_0)=\frac{1}{[E:\QQ]}\sum_{\sigma\in\Gal(E/\QQ)}\sum_{v\in M_\QQ}\log\max\{1, |a_{d-2}^\sigma|_v, ..., |a_0^\sigma|_v\}\]
for any Galois extension $E/\QQ$ containing $\QQ(a_{d-2}, ..., a_0)$.
Our main result relates the critical height to the monic centred height.  Note that, while we state this result over $\overline{\QQ}$, the proof also works in the case of function fields (where many of the error terms vanish).

\begin{theorem}\label{th:main}
There exist effectively computable constants  $C_1$ and $C_2$, both depending just on $d$, such that
\[\left(\frac{1}{2d-1}\right)\hnaive{mc}(f)-C_1\leq \hcrit(f)\leq 4\hnaive{mc}(f)+C_2\]
for all $f\in\mathcal{P}_d(\overline{\QQ})$.
\end{theorem}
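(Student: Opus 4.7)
The plan is to handle the two inequalities separately. Both rely on the standard telescoping estimate
\[
\bigl|\hat{\lambda}_v(z) - \log^+|z|_v\bigr| \leq \tfrac{1}{d-1}\log^+\|f\|_v + O_d(1),
\]
obtained by iterating $\log^+|f(z)|_v \leq d\log^+|z|_v + \log^+\|f\|_v + O_d(1)$, whose global sum gives the comparison $|\hhat_f(z) - h(z)| \leq (d-1)^{-1}\hnaive{mc}(f) + O_d(1)$. This is the monic-polynomial form of the Call--Silverman estimate and depends only on $d$ and the coefficients of $f$.

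For the upper bound, observe that the critical points, counted with multiplicities $m_c = e_c - 1$ (so $\sum_c m_c = d-1$), are the roots of $f'(z) = dz^{d-1} + (d-2)a_{d-2}z^{d-3}+\cdots+a_1$, whose height satisfies $h(f') \leq \hnaive{mc}(f) + \log d$. The classical Mahler-measure identity then yields $\sum_c m_c h(c) \leq h(f') - \log d + O_d(1) \leq \hnaive{mc}(f) + O_d(1)$. Summing the comparison over critical points gives
\[
\hcrit(f) = \sum_c m_c \hhat_f(c) \leq \sum_c m_c h(c) + (d-1)\left(\tfrac{1}{d-1}\hnaive{mc}(f) + O_d(1)\right) \leq 2\hnaive{mc}(f) + O_d(1),
\]
well within the stated constant $4$.

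For the lower bound, the naive approach of writing $\hhat_f(c) \geq h(c) - \text{const}\cdot\hnaive{mc}(f)$ is useless, because critical points can have vanishing naive height (for instance $c=0$ when $f(z) = z^d + a_0$). The remedy is to pass to one iterate forward via $\hhat_f(c) = d^{-1}\hhat_f(f(c))$ and invoke the classical resultant identity $\operatorname{Res}(f,f') = d^d\prod_c f(c)^{m_c} = \pm\operatorname{disc}(f)$, which gives
\[
\sum_c m_c h(f(c)) \geq h(\operatorname{disc}(f)) - d\log d.
\]
Applying the reverse Call--Silverman inequality at each $f(c)$ then reduces the theorem to a lower bound of roughly the form $h(\operatorname{disc}(f)) \geq \frac{3d-1}{2d-1}\hnaive{mc}(f) - O_d(1)$, the precise constant to be extracted by tracking the bookkeeping.

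The principal obstacle is this discriminant estimate. Since $\operatorname{disc}(f)$ is a weighted-homogeneous polynomial in $a_{d-2},\ldots, a_0$ of weighted degree $d(d-1)$ that vanishes on a positive-codimension stratum, bounding $h(\operatorname{disc}(f))$ from below in terms of the \emph{unweighted} height $\hnaive{mc}(f)$ requires a delicate place-by-place analysis: at each $v$ one must isolate a nonzero monomial of $\operatorname{disc}(f)$ whose $v$-adic absolute value dominates. A more conceptual alternative is to establish the required inequality locally by a quantitative escape-rate argument, namely, that whenever $\log^+\|f\|_v$ is large, at least one critical point $c$ has its forward orbit entering the escape regime $\{|z|_v > \|f\|_v^{1/2}\}$ after a bounded number of iterates, forcing $\hat{\lambda}_v(c) \geq (2d-1)^{-1}\log^+\|f\|_v - O_v(1)$ with $O_v(1)$ nonzero at only finitely many $v$. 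Either route requires careful bookkeeping to extract exactly the exponent $\frac{1}{2d-1}$, and this is where all the substantive work in the proof must lie.
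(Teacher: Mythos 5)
Your upper bound argument is sound, and in fact more direct than the paper's: by working with the coefficients of $f'$ and a Mahler-measure estimate you get $\hcrit(f) \leq 2\hnaive{mc}(f) + O_d(1)$, whereas the paper passes through the critical-point parametrization $f_\mathbf{c}$ and loses a factor of $2$, ending up with the constant $4$. That side is fine.

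The lower bound is where the proposal has a genuine gap, and the gap is fatal for the route you commit to first. The discriminant identity $\operatorname{disc}(f) = \pm d^d \prod_c f(c)^{m_c}$ gives only $h(\operatorname{disc}(f)) \leq \sum_c m_c\, h(f(c)) + O_d(1)$, and you propose to close the argument by proving $h(\operatorname{disc}(f)) \geq \frac{3d-1}{2d-1}\hnaive{mc}(f) - O_d(1)$. But no such inequality can hold for any positive constant in place of $\frac{3d-1}{2d-1}$: the discriminant locus is a hypersurface in the $(a_{d-2},\dots,a_0)$-space, so it contains points of arbitrarily large naive height, and on those points $h(\operatorname{disc}(f)) = h(0) = 0$. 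You gesture at this ("vanishes on a positive-codimension stratum... requires a delicate place-by-place analysis") but do not recognize that it kills the strategy outright rather than merely complicating it. The correct insight, which the paper uses, is that you should not work with a single quantity like $\operatorname{disc}(f) = \pm d^d\prod_c f(c)^{m_c}$ (which is a single polynomial that \emph{does} vanish nontrivially), but with the \emph{system} of values $G_i(\mathbf{c}) = f_\mathbf{c}(c_i)$ simultaneously: these forms have no common zero besides $\mathbf{c}=0$ (Lemma 4 of the paper), and the effective Nullstellensatz then forces at least one of the $|f_\mathbf{c}(c_j)|_v$ to be large at every place whenever $\max_i|c_i|_v$ is large, which is precisely the escape-rate statement you mention as an alternative. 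Your second, sketchy suggestion is therefore on the right track, but it is left entirely unexecuted, and without the Nullstellensatz mechanism (or an explicit elimination replacing it) there is no reason that "at least one critical point has its forward orbit entering the escape regime after a bounded number of iterates." That claim is exactly the crux, and the proposal does not supply an argument for it.
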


It is clear from Theorem~\ref{th:main} that the set of post-critically finite polynomials of given degree is a set of bounded height (relative to the ample Weil height $\hnaive{mc}$ on $\mathcal{P}_d$).  Applying standard results about heights (see, for example \cite[Theorem~5.11]{jhs:aec}), one obtains the following consequence of this observation.
\begin{corollary}\label{cor:finite eff comp}
Fix $d\geq 2$ and $B\geq 1$.  Then there are only finitely many affine-conjugacy classes of post-critically finite polynomials of degree $d$, with coefficients of algebraic degree at most $B$, and a set of representatives of these conjugacy classes is effectively computable.
\end{corollary}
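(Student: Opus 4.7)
\medskip

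\noindent\textbf{Proof plan for Corollary~\ref{cor:finite eff comp}.}

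The plan is to combine Theorem~\ref{th:main} with Northcott's theorem. Recall from the introduction that $\hcrit(f) = 0$ whenever $f$ is post-critically finite, since $\hhat_f(P) = 0$ at every preperiodic point and the critical points of a PCF map are preperiodic. So the first step is to substitute $\hcrit(f) = 0$ into the lower bound of Theorem~\ref{th:main}, which gives the effective inequality
\[
\hnaive{mc}(f) \;\leq\; (2d-1)\, C_1
\]
for every PCF polynomial $f \in \mathcal{P}_d(\overline{\QQ})$, where $C_1$ depends only on $d$.

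The second step is to translate this height bound, together with the hypothesis that the coefficients of $f$ have algebraic degree at most $B$, into a Northcott-type input. Given any representative $g(z) = a_d z^d + a_{d-1} z^{d-1} + \cdots + a_0$ of an affine-conjugacy class, I would produce a monic centred representative by first shifting via $z \mapsto z - a_{d-1}/(d a_d)$ (which stays in the field generated by the $a_i$), and then scaling via $z \mapsto \alpha z$ with $\alpha^{d-1} = a_d$. The centred coefficients $a_{d-2}, \ldots, a_0$ of the resulting monic centred polynomial therefore lie in a number field of degree at most $B' := (d-1) \cdot B^d$ over $\QQ$ (an explicit and crude bound, chosen to make the bookkeeping transparent), hence each has algebraic degree over $\QQ$ bounded by $B'$.

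The third step is then a direct appeal to Northcott's theorem in the form \cite[Theorem~5.11]{jhs:aec}: the set of tuples $(a_{d-2}, \ldots, a_0) \in \overline{\QQ}^{d-1}$ with $h(a_{d-2}, \ldots, a_0) \leq (2d-1) C_1$ and with each coordinate of algebraic degree at most $B'$ is finite and effectively computable. Each such tuple determines a monic centred polynomial, and at most $d-1$ of them can represent the same affine-conjugacy class (coming from the residual scaling $z \mapsto \zeta z$ with $\zeta^{d-1} = 1$), so extracting one representative per class gives the desired finite, effectively computable list.

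The only real obstacle is the bookkeeping in the second step: one must verify that moving between an arbitrary representative of a conjugacy class and the monic centred normal form does not blow up the algebraic degree uncontrollably. I expect this to be straightforward since the change of variables is explicit and only introduces a $(d-1)$-th root of the leading coefficient; the rest of the argument is a mechanical application of Theorem~\ref{th:main} and Northcott.
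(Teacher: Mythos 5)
Your plan follows the paper's approach closely: apply the lower bound of Theorem~\ref{th:main} with $\hcrit(f)=0$ to bound $\hnaive{mc}(f)$, pass to a monic centred representative living in a field of degree bounded in terms of $d$ and $B$, and then invoke Northcott to get a finite, effectively computable candidate list. (One small bookkeeping note: the paper reads ``coefficients of algebraic degree at most $B$'' as $[\QQ(a_d,\ldots,a_0):\QQ]\le B$, giving the cleaner bound $(d-1)B$ for the monic centred form; your cruder per-coefficient reading with $(d-1)B^d$ is off by one exponent but still works and even proves a slightly stronger statement.)

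However, there is a genuine gap at the final step, and you've misidentified where the difficulty actually lies. Producing a finite, effectively computable list of candidate monic centred polynomials is not the same as producing an effectively computable set of representatives of the PCF conjugacy classes: you must also \emph{decide}, for each candidate on the list, whether it is in fact post-critically finite. Simply iterating a critical point and watching for a repetition is not an algorithm, because if the orbit never repeats you have no termination criterion. The paper flags exactly this as ``the one subtlety'' and resolves it by observing that the critical points themselves, together with their entire forward orbits in the PCF case, are algebraic of bounded degree and of bounded (naive) height, so that a second application of Northcott gives an effective upper bound on the possible orbit lengths; this yields a halting test. Your plan omits this step entirely, instead flagging the change-of-variables bookkeeping in step two as the ``only real obstacle,'' whereas that part is routine and the decidability of the PCF condition is where the real content of ``effectively computable'' resides.
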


Indeed, the results are explicit enough that one could simply write down an expression in $d$ and $B$ which bounds the number of such conjugacy classes, but since this bound is far larger than the actual number, we have neglected to do so.  It is possible that the estimates in this article could be improved, but what limits  the quality of the upper bound are the rather large constants that arise from the best-known effective versions of Hilbert's Nullstellensatz.

Our second corollary is a much weaker version of Thurston's result on $\mathcal{M}_d$, but we provide a simple argument in the polynomial case in order to show in a self-contained way that our result applies to all complex post-critically finite polynomials.  It should be noted that Epstein \cite{epstein} has independently used similar techniques to prove a stronger version of Corollary~\ref{cor: all pcf maps are algebraic} in the case where $d$ is a prime power.
\begin{corollary}\label{cor: all pcf maps are algebraic}  The locus of post-critically finite maps in $\mathcal{P}_d$ is contained a countable union of $0$-dimensional $\QQ$-rational subvarieties.
In particular, if $f\in\mathcal{P}_d(\CC)$ is a post-critically finite class, then $f\in\mathcal{P}_d(\overline{\QQ})$.
\end{corollary}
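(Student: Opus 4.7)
The plan is to cover the PCF locus in $\mathcal{P}_d$ by an explicit countable family of $\QQ$-subvarieties $V_{T,P}$ indexed by pairs of positive integers $(T,P)$, and then use Theorem~\ref{th:main} to force each $V_{T,P}$ to be zero-dimensional.

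I would parameterize $\mathcal{P}_d$ via the monic centred cover $\AA^{d-1} \to \mathcal{P}_d$ and, working in $\QQ[a_0, \dots, a_{d-2}][z]$, perform the polynomial division
\[f^{T+P}(z) - f^T(z) = q(z)\, f'(z) + r(z),\]
which, since the leading coefficient $d$ of $f'$ is a unit in $\QQ$, produces a remainder $r(z) = r_0 + r_1 z + \cdots + r_{d-2} z^{d-2}$ with $r_i \in \QQ[a_0, \dots, a_{d-2}]$. Let $V_{T,P}$ be the common zero locus of $r_0, \dots, r_{d-2}$; the divisibility condition it encodes is invariant under the $\mu_{d-1}$-action on $\AA^{d-1}$, so $V_{T,P}$ descends to a $\QQ$-subvariety of $\mathcal{P}_d$. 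Every point of $V_{T,P}$ satisfies $f'(z) \mid f^{T+P}(z) - f^T(z)$ in $\CC[z]$, which forces every critical point of $f$ to be preperiodic, so $V_{T,P}$ lies in the PCF locus. Conversely, if $f$ is PCF, I would let $T$ be the maximum tail length among the critical points and $P$ the least common multiple of their eventual periods; then $f^{T+P}(c) = f^T(c)$ for each critical point $c$. A chain-rule computation shows that $f^n(z) - f^n(c)$ vanishes at $z = c$ to order at least $e_c(f^n) \geq \mu_c + 1$, where $\mu_c$ is the multiplicity of $c$ as a root of $f'$. Combining the two vanishing statements for $n = T$ and $n = T+P$ with $f^{T+P}(c) = f^T(c)$ gives $\ord_{z = c}(f^{T+P} - f^T) \geq \mu_c + 1$ for every critical point $c$, whence $f'(z) \mid f^{T+P}(z) - f^T(z)$ and $f \in V_{T,P}$.

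It remains to force $\dim V_{T,P} = 0$. Every $\overline{\QQ}$-point of $V_{T,P}$ is PCF by construction, so Theorem~\ref{th:main} provides the uniform bound $\hnaive{mc}(f) \leq (2d-1)C_1$ on $V_{T,P}(\overline{\QQ})$. If some component of $V_{T,P}$ were positive-dimensional, then at least one coordinate on $\AA^{d-1}$ would be non-constant on that component, and standard properties of Weil heights force $\hnaive{mc}$ to be unbounded there, contradicting the above. Hence $\dim V_{T,P} = 0$, and a zero-dimensional $\QQ$-subvariety of $\mathcal{P}_d$ has all its $\CC$-points defined over $\overline{\QQ}$, yielding the ``in particular'' conclusion. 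The step that requires the most care is the chain-rule / order-of-vanishing argument at critical points of higher multiplicity, needed to upgrade pointwise preperiodicity of critical points to divisibility of $f^{T+P} - f^T$ by the full $f'$; the height-unboundedness step is entirely routine.
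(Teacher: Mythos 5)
Your argument is correct and reaches the same conclusion, but it differs from the paper's proof in two useful respects, and one step is stated a bit loosely.

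The paper simply asserts that the PCF locus is ``clearly contained in the union of countably many $\QQ$-rational affine subvarieties, defined by the different possible orbit types of the critical points,'' and then derives the zero-dimensionality by supposing some $V$ has a transcendental point, extracting a curve $X\subseteq V$ over a number field $L$, pulling back $L$-rational points under a degree-$D$ map $X\to\PP^1$ to produce infinitely many PCF classes of algebraic degree $\leq D[L:\QQ]$, and contradicting Corollary~\ref{cor:finite eff comp}. You instead make the covering family completely explicit and uniform by imposing the single divisibility $f'(z)\mid f^{T+P}(z)-f^T(z)$, and then invoke Theorem~\ref{th:main} directly: all of $V_{T,P}(\overline{\QQ})$ is PCF, hence $\hnaive{mc}$ is uniformly bounded there, which is incompatible with positive dimension. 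This is genuinely cleaner on two counts. First, the divisibility condition is manifestly $\mu_{d-1}$-invariant, so the descent to $\mathcal{P}_d$ is automatic, whereas the paper's ``one period per labelled critical point'' conditions are not individually invariant under permuting the $c_i$ and require a little care to package into $\QQ$-rational subvarieties of the moduli space. Second, your verification that PCF points actually lie on some $V_{T,P}$ is not vacuous: at a critical point $c$ of multiplicity $\mu_c$ one needs $\ord_{z=c}\big(f^{T+P}-f^T\big)\geq\mu_c$, and your ramification computation $\ord_{z=c}\big(f^n(z)-f^n(c)\big)=e_c(f^n)\geq e_c(f)=\mu_c+1$ for $n=T,T+P$ (combined with $f^{T+P}(c)=f^T(c)$) is exactly the right argument; the paper sidesteps this by choosing its varieties pointwise rather than via a divisibility.

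The only place I would tighten your wording is the final dimension-zero step. Boundedness of a Weil height on a positive-dimensional variety is \emph{not} in itself a contradiction (roots of unity are Zariski dense in $\mathbb{G}_m$ and all have height $0$), so ``a non-constant coordinate forces $\hnaive{mc}$ to be unbounded'' needs one more sentence: take a curve $C$ in the offending component defined over a number field, note some $a_i\colon C\to\AA^1$ is non-constant hence dominant, and pull back integers $n$ to produce points of $C(\overline{\QQ})$ with $h(a_i)=\log n\to\infty$, hence $\hnaive{mc}\to\infty$. Said this way it is the same curve-plus-pullback device the paper uses, just fed into Theorem~\ref{th:main} rather than Corollary~\ref{cor:finite eff comp}. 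With that sentence added your proof is complete and, if anything, somewhat more self-contained than the one in the paper.
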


It is natural to ask how close the inequalities in Theorem~\ref{th:main} are to being sharp. Before proceeding with the proof of the main result we present examples to show that the inequalities cannot be improved by too much.
For the first inequality, let
\[f_{c, d}(z)=z^d-\frac{dc}{d-1}z^{d-1},\]
which has a critical point of multiplicity $d-2$ at $z=0$ and one of multiplicity one at $z=c$.  
By Theorem~1 of \cite{pi:spec}, we have
\[\hcrit(f_{c, d})=\hhat_{f_{c, d}}(c)=h(c)+O(1),\]
where the implied constant depends only on $d$.
Conducting the appropriate change of variables, one can show that
\[\hnaive{mc}(f_{c, d})\geq dh(c)+O(1).\]
  Combining these estimates gives a family of examples witnessing
\[\liminf_{\substack{f\in \mathcal{P}_d(\overline{\QQ})\\\hnaive{mc}(f)\to\infty}}\frac{\hcrit(f)}{\hnaive{mc}(f)}\leq \frac{1}{d},\]
compared with the lower bound of $1/(2d-1)$ given by Theorem~\ref{th:main}.

For the other inequality, note that the polynomial $f_{c, d}(z)=z^d+c$ satisfies
\[\hcrit(f_{c, d})=(d-1)\hhat_{f_{c, d}}(0)=\frac{d-1}{d}h(c)+O(1),\]
again by Theorem~1 of \cite{pi:spec}, while clearly $\hnaive{mc}(f_{c, d})=h(c)$.  We have, then,
\[\limsup_{\substack{f\in \mathcal{P}_d(\overline{\QQ})\\\hnaive{mc}(f)\to\infty}}\frac{\hcrit(f)}{\hnaive{mc}(f)}\geq 1-\frac{1}{d}.\]  Theorem~\ref{th:main} bounds this quantity above by 4.

\section{Preliminaries and lemmas}

For the remainder of the paper, we will fix an integer $d\geq 2$ and an extension to $\overline{\QQ}$ of each of the usual absolute values on $\QQ$.  For $v\in M_\QQ$ and $f(z)\in \overline{\QQ}[z]$ of degree $d$, set
\[\lhat_{f, v}(z)=\lim_{N\to\infty}d^{-N}\log\max\{1, |f^N(z)|_v\},\]
 so that the usual canonical height corresponding to $f(z)$ may be defined by
\begin{equation}\label{eq:can height}\hhat_f(z)=\frac{1}{[E:\QQ]}\sum_{\sigma\in\Gal(E/\QQ)}\sum_{v\in M_\QQ}\lhat_{\sigma(f), v}(\sigma(z)),\end{equation}
for any Galois extension $E/\QQ$ containing $z$ and the coefficients of $f$.
We will also set
\[\lcrit{v}(f)=\sum_{f'(c)=0}\lhat_{f, v}(c),\]
where the sum is taken with multiplicity, and note that $\hcrit(f)$ may be defined in terms of these local contributions in a fashion similar to \eqref{eq:can height}.  Although we miss the infinite critical point in this sum, that point is fixed and hence has canonical height 0.  
Note that the archimedean contribution to the critical height has been studied extensively by DeMarco~\cite{demarco}.
   Finally, for convenience of notation, we define the symbol $(r)_v$ for any real number $r$ by
\[(r)_v=\begin{cases}r & \text{if }v\text{ is archimedean}\\ 1 & \text{otherwise.}\end{cases}\]

The following lemma tells us that, when $z$ is sufficiently $v$-adically close to the super-attracting fixed point $\infty$, the $v$-adic contributions to the canonical height and to the naive height are essentially the same.  \begin{lemma}\label{lem:basins}
Let
\[f(z)=a_dz^d+a_{d-1}z^{d-1}+\cdots+a_1z+a_0,\]
and let $|\cdot|_v$ be an absolute value on $\QQ$.  Then if
\[|z|_v>C_{f,v}=(2d)_v\max_{0\leq i\leq d}\left\{1, \left|\frac{a_i}{a_d}\right|_v^{1/(d-i)}, |a_d|_v^{-1/(d-1)}\right\},\]
then
\[\lhat_{f, v}(z)=\log|z|_v+\frac{1}{d-1}\log|a_d|_v+\epsilon(f, z, v),\]
where $\epsilon(f, z, v)=0$ if $v$ is non-archimedean, and
\[-\log 2\leq \epsilon(f, z, v)\leq \log\frac{3}{2}\]
otherwise.
\end{lemma}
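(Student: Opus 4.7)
The plan is a standard escape-rate telescoping argument. The key observation is that the hypothesis $|z|_v > C_{f,v}$ makes $f(z)$ dominated by its leading term $a_d z^d$ with a controlled multiplicative error, and (crucially) this dominance persists under iteration; once that is in hand, summing a geometric series finishes the job.

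For the per-step estimate I would write
\[f(z) = a_d z^d\left(1 + \sum_{i=0}^{d-1} \frac{a_i}{a_d}\, z^{i-d}\right),\]
and note that the factors defining $C_{f,v}$ give $|(a_i/a_d)\, z^{i-d}|_v \le (2d)_v^{-(d-i)} \le (2d)_v^{-1}$ for each $0 \le i \le d-1$. In the non-archimedean case every such summand has $v$-size strictly less than $1$, so the ultrametric inequality yields $|f(z)|_v = |a_d|_v|z|_v^d$ exactly; in the archimedean case the sum of the $d$ summands has absolute value at most $d\cdot (2d)^{-1}=1/2$, so that $\tfrac{1}{2}|a_d|_v|z|_v^d \le |f(z)|_v \le \tfrac{3}{2}|a_d|_v|z|_v^d$. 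Moreover, the part of the hypothesis coming from $|a_d|_v^{-1/(d-1)}$, namely $|a_d|_v|z|_v^{d-1} > (2d)_v^{d-1}$, combined with the above forces $|f(z)|_v > |z|_v$ whenever $d \ge 2$; hence $|f^n(z)|_v$ is strictly increasing, stays above $C_{f,v} \ge 1$, and so $\log\max\{1,|f^n(z)|_v\} = \log|f^n(z)|_v$ throughout the orbit.

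Now for the telescoping. Setting
\[\eta_n = \log|f^{n+1}(z)|_v - d\log|f^n(z)|_v - \log|a_d|_v,\]
the per-step estimate says $\eta_n = 0$ in the non-archimedean case and $\eta_n \in [-\log 2,\log(3/2)]$ in the archimedean case. Rewriting as $d^{-(n+1)}\log|f^{n+1}(z)|_v - d^{-n}\log|f^n(z)|_v = d^{-(n+1)}(\log|a_d|_v + \eta_n)$, summing over $n$, and taking $N\to\infty$, the telescope collapses to
\[\lhat_{f,v}(z) = \log|z|_v + \frac{\log|a_d|_v}{d-1} + \sum_{n\ge 0}\frac{\eta_n}{d^{n+1}}.\]
The final sum is the error $\epsilon(f,z,v)$; it vanishes in the non-archimedean case, and in the archimedean case is bounded above by $\log(3/2)/(d-1)$ and below by $-\log 2/(d-1)$, both inside $[-\log 2, \log(3/2)]$ for $d \ge 2$.

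The only delicate step is verifying that iteration preserves the hypothesis $|z|_v > C_{f,v}$, which is precisely why $C_{f,v}$ must include the term $|a_d|_v^{-1/(d-1)}$ (and not merely the factors needed to make $|a_d z^d|_v$ beat the lower-order terms). Once the forward orbit is trapped in the region of large $|z|_v$, the rest of the argument is mechanical.
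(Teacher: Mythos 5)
Your proof is correct and uses the standard escape-rate telescoping argument, which is essentially the approach taken in the referenced Lemma~5 of \cite{pi:spec} (the paper itself only cites that reference, noting the constant $C_{f,v}$ differs slightly but the same proof applies). The per-step comparison of $|f(z)|_v$ with $|a_d|_v|z|_v^d$, the verification that the orbit stays in the escape region via the $|a_d|_v^{-1/(d-1)}$ term in $C_{f,v}$, and the geometric-series summation are all carried out correctly; in fact your bound $|\epsilon(f,z,v)|$ is a factor of $d-1$ tighter than stated, which is harmless.
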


\begin{proof}
See \cite[Lemma~5]{pi:spec}.  Note that the definition of the quantity $C_{f, v}$ here is not the same as that in \cite{pi:spec}, but the same proof works.
\end{proof}

From this point forward, we work with a particular normal form.  For $\mathbf{c}=(c_1, ..., c_{d-1})\in\AA^{d-1}(\overline{\QQ})$, we set
\[f_{\mathbf{c}}(z)=\frac{1}{d}z^d-\frac{1}{d-1}(c_1+\cdots+c_{d-1})z^{d-1}+\cdots+(-1)^{d-1}c_1c_2\cdots c_{d-1}z,\]
so that the critical points of $f_\mathbf{c}(z)$ are precisely $z=c_1$, $z=c_2$, and so on.

\begin{lemma}\label{lem:c_f to c-height}
Let $v\in M_\QQ$ and $\mathbf{c}\in\AA^{d-2}(\overline{\QQ})$.
There is a real number $\xi_v$ such that
\[\log C_{f_{\mathbf{c}}, v}\leq \log\max\{1, |c_1|, ..., |c_{d-1}|\}+\xi_v,\]
and furthermore $\xi_v=0$ for all but finitely many places $v\in M_\QQ$.
\end{lemma}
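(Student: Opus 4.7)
The plan is to compute the coefficients of $f_{\mathbf{c}}$ explicitly and feed them into the definition of $C_{f_{\mathbf{c}},v}$ from Lemma~\ref{lem:basins}. Since the critical points of $f_{\mathbf{c}}$ are exactly $c_1,\ldots,c_{d-1}$, we have
\[f_{\mathbf{c}}'(z)=\prod_{i=1}^{d-1}(z-c_i)=\sum_{j=0}^{d-1}(-1)^{d-1-j}e_{d-1-j}(\mathbf{c})\,z^j,\]
where $e_j$ denotes the $j$-th elementary symmetric polynomial. Integrating and matching with $f_{\mathbf{c}}(z)=\sum_{k=0}^d a_k z^k$ gives $a_d=1/d$, $a_0=0$, and $a_k=(-1)^{d-k}e_{d-k}(\mathbf{c})/k$ for $1\le k\le d-1$. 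So for $1\le i\le d-1$,
\[\left|\frac{a_i}{a_d}\right|_v^{1/(d-i)}=\left|\frac{d}{i}\right|_v^{1/(d-i)}\bigl|e_{d-i}(\mathbf{c})\bigr|_v^{1/(d-i)}.\]

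Set $M=\max\{1,|c_1|_v,\ldots,|c_{d-1}|_v\}$. The standard bound on a symmetric polynomial gives $|e_j(\mathbf{c})|_v\le\binom{d-1}{j}_v M^j$, so
\[\bigl|e_{d-i}(\mathbf{c})\bigr|_v^{1/(d-i)}\le \binom{d-1}{d-i}_v^{1/(d-i)} M.\]
Combined with $|a_d|_v^{-1/(d-1)}=|d|_v^{1/(d-1)}$ (when $a_d=1/d$), the definition of $C_{f_{\mathbf{c}},v}$ yields
\[\log C_{f_{\mathbf{c}},v}\le \log M + \log\left[(2d)_v\max\Bigl\{1,\ |d|_v^{1/(d-1)},\ \max_{1\le i\le d-1}\left|\tfrac{d}{i}\right|_v^{1/(d-i)}\binom{d-1}{d-i}_v^{1/(d-i)}\Bigr\}\right].\]
We take $\xi_v$ to be the bracketed logarithm.

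The main obstacle, such as it is, amounts to verifying that $\xi_v=0$ at almost every place. This is immediate from the formula: if $v$ is non-archimedean and $v\nmid d!$, then $(2d)_v=1$, $|d|_v=1$, $|d/i|_v\le 1$, and $\binom{d-1}{d-i}_v\le 1$, whence $\xi_v=0$. The only places contributing nonzero $\xi_v$ are the archimedean one and the finitely many non-archimedean places dividing $d!$, completing the proof.
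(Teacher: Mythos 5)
Your proof is correct and follows essentially the same route as the paper: compute the coefficients $a_i$ of $f_{\mathbf c}$ as scaled elementary symmetric polynomials in the $c_j$, plug these into the definition of $C_{f_{\mathbf c},v}$, bound each term by $M=\max\{1,|c_1|_v,\dots,|c_{d-1}|_v\}$ via the (ultra)triangle inequality, and observe that all the extraneous constants are trivial at non-archimedean places not dividing $d!$. The only differences are cosmetic: you keep the exact monomial count $\binom{d-1}{d-i}$ where the paper uses a cruder estimate, and you present a single closed expression for $\xi_v$ rather than the paper's case split by place, which if anything makes the ``$\xi_v=0$ for almost all $v$'' conclusion cleaner to read off.
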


\begin{proof}
If we write
\[f_{\mathbf{c}}(z)=a_dz^d+a_{d-1}z^{d-1}+\cdots+a_1z+a_0,\]
then
 for each $0\leq i<d$, $s_i=\pm ia_i$ is the elementary symmetric polynomial of degree $d-i$ in $c_1, ..., c_{d-1}$, which is a sum of at most $d$ monomials.  It follows from the triangle/ultrametric inequality that
\begin{multline}\label{eq:triangle}\left|\frac{a_i}{a_d}\right|_v^{1/(d-i)}\leq \left|\frac{s_i/i}{1/d}\right|_v^{1/(d-i)}  \leq \left|\frac{d}{i}\right|_v^{1/(d-i)}(d)_v(\max\{|c_j|_v\}^{d-i})^{1/(d-i)}\\ \leq \left|\frac{d}{i}\right|_v^{1/(d-i)}(d)_v\max\{|c_j|_v\}.\end{multline}
If $|\cdot|_v$ is non-archimedean, and restricts on $\QQ$ to a $p$-adic absolute value with $p>d$, then, 
 $|d/i|_v=1$ and $|a_d|_v^{-1/(d-1)}=|d|_v^{1/(d-1)}=1$.  It follows that we may take $\xi_v=0$.
For the remaining non-archimedean places, it is clear from \eqref{eq:triangle} that we can take
\[\xi_v=\log\max\left\{\left|\frac{d}{i}\right|_v^{1/(d-i)}, 1\right\}\]
(since $|d|_v\leq 1$).  For the archimedean place, we take
\[\xi_v=\log\max\left\{\left|\frac{d}{i}\right|_v^{1/(d-i)}, |d|_v^{1/(d-1)} ,1\right\}+\log d.\]
\end{proof}

Next we prove a lemma which reduces the problem to relating the critical height of $f_{\mathbf{c}}$ to the height of the point $\mathbf{c}\in \AA^{d-1}(\overline{\QQ})$.

\begin{lemma}\label{lem:naive to c-height}
There exists reals $C_5$ and $C_6$, depending on $d$, such that for any $\mathbf{c}\in\AA^{d-1}(\overline{\QQ})$
\begin{equation}\label{eq:naive upper}\hnaive{mc}(f_{\mathbf{c}})\leq \sum_{i=1}^{d-1}h(c_i)+dh(c_1, ..., c_{d-1})+C_5,\end{equation}
and such that for any $\mathbf{c}\in\AA^{d-1}(\overline{\QQ})$ there exists a $\mathbf{c}'\in\AA^{d-1}(\overline{\QQ})$ with
\begin{equation}\label{eq:naive lower}\sum_{i=1}^{d-1}h(c_i')\leq 2\hnaive{mc}(f_{\mathbf{c}'})+C_6,\end{equation}
and with $f_{\mathbf{c}'}$ affine-conjugate to $f_{\mathbf{c}}$.
\end{lemma}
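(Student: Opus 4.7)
My plan is to perform explicit affine conjugations to bring $f_{\mathbf{c}}$ into monic centred form, and, for the lower bound, to choose the normal-form representative of the conjugacy class carefully using the freedom from translating by a fixed point.

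For the upper bound, I would conjugate $f_{\mathbf{c}}$ by $\phi(z) = \lambda z + \mu$, where $\lambda^{d-1} = d$ and $\mu = (c_1 + \cdots + c_{d-1})/(d-1)$. The resulting polynomial $g(z) = \lambda^{-1}(f_{\mathbf{c}}(\lambda z + \mu) - \mu)$ is monic (by the choice of $\lambda$) with vanishing $z^{d-1}$-coefficient (by the choice of $\mu$), so $\hnaive{mc}(f_{\mathbf{c}}) = h(b_{d-2}, \ldots, b_0)$ for its remaining coefficients. Each $b_i$ is an explicit polynomial in $c_1, \ldots, c_{d-1}$ and $\mu$, of total degree at most $d-i$ in the $c_j$, with rational coefficients depending only on $d$ (and on $\lambda$). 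Place-by-place bounds via the triangle and ultrametric inequalities yield $h(b_i) \leq (d-i)h(c_1, \ldots, c_{d-1}) + h(\mu) + O(1)$; combining these bounds across $i$ and using $h(\mu) \leq \sum_j h(c_j) + O(1)$ gives the first inequality.

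For the lower bound, the key technical point is that the natural translation $z \mapsto z + \mu$ centring the critical points does \emph{not} preserve the normal form, since it changes the constant term. Instead, I would first pass to the monic centred form $g$ above, so that $\hnaive{mc}(f_{\mathbf{c}}) = h(b_{d-2}, \ldots, b_0)$, its critical points $\gamma_i = (c_i - \mu)/\lambda$ are the roots of $g'(z)/d$, and its fixed points $\alpha_1, \ldots, \alpha_d$ are the roots of $g(z) - z$. Two applications of a Mahler-measure estimate (Gauss's lemma at non-archimedean places and $M(p) \leq \sqrt{n+1}\,\|p\|_\infty$ at archimedean places) then yield
\[
\sum_{i} h(\gamma_i) \leq \hnaive{mc}(f_{\mathbf{c}}) + O(1) \qquad\text{and}\qquad \sum_{k=1}^{d} h(\alpha_k) \leq \hnaive{mc}(f_{\mathbf{c}}) + O(1),
\]
so by pigeonhole there is a fixed point $\alpha$ of $g$ with $h(\alpha) \leq \hnaive{mc}(f_{\mathbf{c}})/d + O(1)$. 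Conjugating $g$ by $\phi(z) = z/\lambda + \alpha$ produces a polynomial in the desired normal form; call it $f_{\mathbf{c}'}$, with critical points $c_i' = \lambda(\gamma_i - \alpha)$. Combining the estimates above,
\[
\sum_i h(c_i') \leq \sum_i h(\gamma_i) + (d-1)h(\alpha) + O(1) \leq \left(1 + \tfrac{d-1}{d}\right)\hnaive{mc}(f_{\mathbf{c}'}) + O(1),
\]
which is at most $2\hnaive{mc}(f_{\mathbf{c}'}) + C_6$ for a suitable constant $C_6$.

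The main conceptual obstacle is the observation that the normal form is not preserved by translation alone, which forces the fixed-point translation to be used as a separate degree of freedom; the resulting pigeonhole argument on the Mahler bound for the fixed points is precisely what produces the factor $2$ in the stated lower bound.
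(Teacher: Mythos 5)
Your proposal is correct, and both halves are in the same spirit as the paper's argument, but the lower bound is executed by a genuinely different and somewhat cleaner route. The common skeleton is: pass to a monic centred representative $g$; observe that translating by the mean of the critical points would destroy the $f_{\mathbf{c}}$ normal form, so instead conjugate by a map $z\mapsto d^{-1/(d-1)}z+\gamma$ built from a \emph{fixed point} $\gamma$ of $g$; and pigeonhole over the $d$ fixed points using a Mahler-measure bound on the roots of $g(z)-z$. You and the paper then diverge. The paper pigeonholes on the quantity $h(g^{\psi_\gamma})$ directly, and then passes from $h(f_{\mathbf{c}'})$ to $\sum h(c_i')$ via the derivative: this requires an auxiliary comparison $h(F')-\log\deg F\leq h(F)\leq h(F')+1.26\deg F$ for $F(0)=0$, whose upper half is proved using the Rosser--Schoenfeld bound for $\pi(x)$. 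You instead pigeonhole on $h(\alpha)$ over the fixed points, write $c_i'=\lambda(\gamma_i-\alpha)$ explicitly, and bound $\sum h(c_i')$ by $\sum h(\gamma_i)+(d-1)h(\alpha)+O(d)$, where $\sum h(\gamma_i)$ is controlled by the Mahler bound applied to the monic polynomial $g'/d$. This avoids the $h(F)\leftrightarrow h(F')$ lemma and Rosser--Schoenfeld entirely, and yields the slightly sharper constant $(2d-1)/d<2$. Your upper bound likewise sidesteps that lemma (which the paper also invokes there) by observing that each coefficient of $g$ is homogeneous of degree $d-i$ in the $c_j$ and estimating place by place; this is sound, though you should note that the leftover $h(\mu)$ term is itself $\leq h(c_1,\ldots,c_{d-1})+O(\log d)$, so it can be absorbed rather than appearing separately. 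One small point to keep straight: the critical points of $g$ are the roots of $g'$, which has leading coefficient $d$, so the Mahler bound must be applied to $g'/d$; the resulting $\log d$ loss is harmless, but it should be acknowledged.
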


\begin{proof}
For the purpose of this proof, we will write $h(F)$ in place of $h(b_d, ..., b_{0})$, for any polynomial
\[F(z)=b_dz^d+\cdots+b_1z+b_0\in\overline{\QQ}[z].\]
  Note that if $F$ is monic and centred, then $h(F)$ and $\hnaive{mc}(F)$ coincide, but $\hnaive{mc}$ is an invariant of conjugacy classes, while $h$ is certainly not.
We first show that if $\psi(z)=\alpha z+\gamma\in \overline{\QQ}[z]$, and $F^\psi=\psi^{-1}\circ F\circ\psi$, then
\begin{equation}\label{eq:height affine}h(F^\psi)\leq h(F)+d (h(\alpha)+h(\gamma))+d\log 2+\log d.\end{equation}
To see this, note that if $F(z)=b_dz^d+\cdots+b_0$, then
\begin{eqnarray*}
F^\psi(z)&=&\alpha^{-1}\sum_{i=0}^d b_i(\alpha z+\gamma)^i-\alpha^{-1}\gamma\\
&=&\alpha^{-1}\sum_{i=0}^d\sum_{j=0}^i\left(b_i\binom{i}{j}(z\alpha)^j\gamma^{i-j}\right)-\alpha^{-1}\gamma\\
&=&\sum_{j=0}^d\alpha^{j-1}\left(\sum_{i=j}^d b_i\binom{i}{j}\gamma^{i-j}\right)z^j -\alpha^{-1}\gamma.
\end{eqnarray*}
In other words, if $F^\psi(z)=c_dz^d+\cdots +c_0$,
we have for $1\leq j\leq d$ the inequality
\begin{eqnarray*}
\log|c_j|_v&=&\log\left|\alpha^{j-1}\left(\sum_{i=j}^d b_i\binom{i}{j}\gamma^{i-j}\right)\right|_v\\
&\leq& (j-1)\log |\alpha|_v +\log\max_{j\leq i\leq d} \left| b_i\binom{i}{j}\gamma^{i-j}\right|_v+\log(d)_v\\
&\leq & (j-1)\log |\alpha|_v+\log\max_{j\leq i\leq d} |b_i|_v+ (d-1)\log\max\{1, |\gamma|_v\}\\ &&+\log\max_{1\leq j\leq i\leq d}\left|\binom{i}{j}\right|_v+\log(d)_v\\
&\leq & \log\max\{1, |b_d|_v, ..., |b_0|_v\}+(d-1)\log\max\{1, |\alpha|_v\}\\ &&+(d-1)\log\max\{1, |\gamma|_v\}+\log (d)_v+d\log (2)_v,
\end{eqnarray*}
since $|\binom{i}{j}|_v\leq 1$ for $v$ non-archimedean, and
\[\left|\binom{i}{j}\right|_v\leq2^i\leq 2^d\]
for $v$ archimedean.
Similarly,
\begin{eqnarray*}
\log|c_0|_v&=&\log\left|\alpha^{-1} \left(b_d\gamma^d+b_{d-1}\gamma^{d-1}+\cdots+b_1\gamma-\gamma+b_0\right)\right|_v\\
&\leq &\log|\alpha^{-1}|_v+\log\max\{1, |b_d|_v, ..., |b_0|_v\}+d\log\max\{1, |\gamma|_v\}\\ &&+\log (d+2)_v\\
&\leq &\log\max\{1, |b_d|_v, ..., |b_0|_v\}+\log\max\{1, |\alpha^{-1}|_v\}\\&&+d\log\max\{1, |\gamma|_v\}+\log (2d)_v.
\end{eqnarray*}
Combining these gives
\begin{multline*}
\log\max\{1, |c_d|_v, ..., |c_0|_v\}\leq \log\max\{1, |b_d|_v, ..., |b_0|_v\}\\+(d-1)\log\max\{1, |\alpha|_v\}+\log\max\{1, |\alpha^{-1}|_v\}\\+d\log\max\{1, |\gamma|_v\}+\log (d)_v+d\log (2)_v.
\end{multline*}
Summing over all places, and noting that $h(\alpha^{-1})=h(\alpha)$, gives the bound \eqref{eq:height affine}.

We will also use the fact (for example, see \cite[Theorem~VIII.5.9]{jhs:aec}) that if
\[z^d+b_{d-1}z^{d-1}+\cdots +b_1z+b_0=(z-\beta_1)(z-\beta_2)\cdots(z-\beta_d),\]
then
\begin{equation}\label{eq:height of roots}\sum_{i=1}^d h(\beta_i)-d\log 2\leq h(b_{d-1}, ..., b_0)\leq \sum_{i=1}^d h(\beta_i)+d\log 2.\end{equation}

Now, suppose that 
\[g(z)=z^d+a_{d-2}z^{d-2}+\cdots+a_1z+a_0\]
is affine-conjugate to $f_{\mathbf{c}}(z)$, so that $\hnaive{mc}(f_{\mathbf{c}})=h(g)$.  Now, for every fixed point $\gamma$ of $g$, if $\psi_\gamma(z)=d^{-1/(d-1)}z+\gamma$ (for some choice of $(d-1)$th root), then the polynomial $g^{\psi_\gamma}$ has the form $f_{\mathbf{c}'}$, for some $\mathbf{c}'\in\AA^{d-1}(\overline{\QQ})$, and necessarily this polynomial is affine conjugate to $f_\mathbf{c}$.  Now,  by \eqref{eq:height affine} we have
\begin{eqnarray*}h(g^{\psi_\gamma})&\leq &h(g)+d(h(d^{-1/(d-1)})+h(\gamma))+d\log 2+\log d\\
&\leq &h(g)+dh(\gamma)+O(d),
\end{eqnarray*}
where the implied constant is absolute.   Averaging over the various choices for $\gamma$ (taken with multiplicity), we obtain
\[\frac{1}{d}\sum_{g(\gamma)=\gamma}h(g^{\psi_\gamma})\leq h(g)+\sum_{g(\gamma)=\gamma}h(\gamma)+O(d).\]
But the $\gamma$ are roots of $g(z)-z$, and we claim that $h(g(z)-z)\leq h(g)+\log 2$.
Indeed, for any $v$ we have 
\begin{multline*}
\log\max\{1, |a_{d-2}|_v, ..., |a_1-1|_v, |a_0|_v\}\\ \leq \log\max\{1, |a_{d-2}|_v, ..., (2)_v\max\{1, |a_1|_v\}, |a_0|_v\}\\\leq \log\max\{1, |a_{d-2}|_v, ..., |a_1|_v, |a_0|_v\}+\log(2)_v,
\end{multline*}
and summing over all places gives the inequality $h(g(z)-z)\leq h(g)+\log 2$.
Since $g(z)-z$ is also monic,  we may apply \eqref{eq:height of roots} to obtain
\[ \frac{1}{d}\sum_{g(\beta)=\beta}h(g^{\psi_\beta})\leq h(g)+h(g(z)-z)+O(d)\leq 2h(g)+O(d),\]
and so for some choice of $\gamma$,  the polynomial $g^{\psi_\gamma}=f_{\mathbf{c}'}$ satisfies
\[h(f_{\mathbf{c}'})\leq  2\hnaive{mc}(g)+O(d)=2\hnaive{mc}(f_{\mathbf{c}'})+O(d).\]

Finally, we claim that for any polynomial $F$ with $F(0)=0$, we have
\[h(F')-\log \deg(F)\leq h(F)\leq h(F')+1.26\deg(F).\]
The first inequality is clear, but the second requires some work.  Writing $r=\deg(F)$, we note that if $F'(z)=b_{r-1}z^{r-1}+\cdots+b_0$, then
$F(z)=\frac{1}{r}b_{r-1}z^r+\cdots+b_0z$.  Now, if $v$ is any valuation, 
\begin{multline*}
\log\max\left\{1, \left|\frac{1}{r}b_{r-1}\right|_v, ..., |b_1|_v\right\}\leq\log\max\left\{1, \left|b_{r-1}\right|_v, ..., |b_1|_v\right\}\\+\log\max\left\{1, \left|\frac{1}{r}\right|_v, ..., \left|\frac{1}{2}\right|_v\right\}.\end{multline*}
Summing over all places yields
\[h(F)\leq h(F')+\sum_{p\text{ prime}}\max\{e\log p:e\in\ZZ\text{ and }p^e\leq r\}\leq h(F')+ \pi(r)\log r,\]
where $\pi(x)$ denotes the number of primes $p\leq x$.  The result now follows from the explicit estimate 
\[\pi(x)\leq \frac{1.26x}{\log x},\] for all $x>1$, due to Rosser and Schoenfeld \cite{rosser}. 
(Note that, using the more precise estimate in \cite{rosser}, one might replace the error term by the slightly better bound $r+3r/(2\log r)$.)

 
Since $f'_{\mathbf{c}'}$ is monic, \eqref{eq:height of roots}  gives
\begin{eqnarray*}
\sum_{i=1}^{d-1}h(c_i')
&\leq&h(f'_{\mathbf{c}'})+(d-1)\log 2\\
&\leq&h(f_{\mathbf{c}'})+(d-1)\log 2+\log d\\
&\leq & 2\hnaive{mc}(f_{\mathbf{c}'})+O(d).
\end{eqnarray*}
This gives \eqref{eq:naive lower}, with the additional observation that $C_6$ grows at most linearly in $d$.

For the other bound, we note that if $\psi(z)=\alpha z+\gamma$, for 
$\alpha=d^{1/(d-1)}$ and $\gamma=(d-1)^{-1}(c_1+\cdots+c_{d-1})$, then $f^\psi_{\mathbf{c}}$ is monic and centred, and hence $\hnaive{mc}(f_{\mathbf{c}})=h(f_{\mathbf{c}}^\psi)$.  Since $h(\alpha)=\frac{1}{d-1}\log d$, and
\[h(\gamma)\leq \log(d-1)+h(c_1+\cdots+c_{d-1})\leq h(c_1, ..., c_{d-1})+2\log (d-1),\]
we have by \eqref{eq:height affine} and \eqref{eq:height of roots} that
\begin{eqnarray*}
\hnaive{mc}(f_{\mathbf{c}})&=&h(f_{\mathbf{c}}^\psi)\\
&\leq &h(f_{\mathbf{c}})+d(h(\alpha)+h(\gamma))+O(d)\\
&\leq &h(f'_{\mathbf{c}})+dh(c_1, ..., c_{d-1})+O(d\log d)\\
&\leq &\sum_{i=1}^{d-1}h(c_i)+dh(c_1, ..., c_{d-1})+O(d\log d).
\end{eqnarray*}
We conclude that \eqref{eq:naive upper} holds, with $C_5=O(d\log d)$ as $d\to\infty$.
\end{proof}

The following lemma is crucial to the proof of Lemma~\ref{lem:main}.
\begin{lemma}\label{lem:homog}
For each $1\leq i\leq d-1$, let $G_{i}(c_1, ..., c_{d-1})=f_{\mathbf{c}}(c_i)$.  Then the polynomials $G_i\in \QQ[c_1, ..., c_{d-1}]$ are homogeneous forms of degree $d$, with no common root (over $\overline{\QQ}$) other than the trivial root $c_i=0$ for all $i$.
\end{lemma}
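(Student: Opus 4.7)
The plan is to verify homogeneity directly and then exploit a degree count on $f_{\mathbf{c}}$ to force $\mathbf{c}=0$ at any common zero of the $G_i$. For homogeneity, I would note that the coefficient of $z^k$ in $f_{\mathbf{c}}(z)$ is, up to a rational scalar, the elementary symmetric polynomial of degree $d-k$ in $(c_1,\ldots,c_{d-1})$, so substituting $z=c_i$ produces a sum of monomials each of total degree $d$ in the $c_j$. Equivalently, the identity $f'_{\mathbf{c}}(z)=\prod_i(z-c_i)$ together with $f_{\mathbf{c}}(0)=0$ gives the scaling relation $f_{\lambda\mathbf{c}}(\lambda z)=\lambda^d f_{\mathbf{c}}(z)$, and evaluation at $z=c_i$ yields $G_i(\lambda\mathbf{c})=\lambda^d G_i(\mathbf{c})$.

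For the nontriviality of the common zero locus, suppose $G_i(\mathbf{c})=0$ for every $i$, i.e.\ $f_{\mathbf{c}}$ vanishes at each of its critical points. Let $\beta_1,\ldots,\beta_r$ be the \emph{distinct} values appearing among $c_1,\ldots,c_{d-1}$, with multiplicities $m_1,\ldots,m_r$ (so $\sum_j m_j=d-1$). Then $f'_{\mathbf{c}}(z)=\prod_j(z-\beta_j)^{m_j}$, and at each $\beta_j$ a Taylor expansion gives
\[f_{\mathbf{c}}(z)-f_{\mathbf{c}}(\beta_j)=\frac{\alpha_j}{m_j+1}(z-\beta_j)^{m_j+1}+O\bigl((z-\beta_j)^{m_j+2}\bigr),\]
where $\alpha_j=\prod_{k\neq j}(\beta_j-\beta_k)^{m_k}\neq 0$. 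Combined with $f_{\mathbf{c}}(\beta_j)=0$, this shows $\beta_j$ is a root of $f_{\mathbf{c}}$ of \emph{exact} multiplicity $m_j+1$, so these distinguished roots contribute $(d-1)+r$ to the total multiplicity of zeros of $f_{\mathbf{c}}$. Separately, $f_{\mathbf{c}}(0)=0$ holds automatically since $f_{\mathbf{c}}$ has no constant term.

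A degree count now finishes the argument. If $0$ were distinct from every $\beta_j$, then $f_{\mathbf{c}}$ would have at least $(d-1)+r+1\geq d+1$ zeros counted with multiplicity, which is impossible. Hence $0=\beta_j$ for some $j$, and then $(d-1)+r\leq d$ forces $r=1$, so $c_1=\cdots=c_{d-1}=0$. I expect the main subtlety to be handling coinciding critical points correctly: the naive observation that $f_{\mathbf{c}}(c_i)=0$ only delivers simple roots, but the Taylor expansion above extracts the extra order of vanishing at multiple critical points, and without that strengthening the multiplicity bookkeeping gives nothing.
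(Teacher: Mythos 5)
Your proposal is correct and follows essentially the same approach as the paper's proof: establish homogeneity via the elementary symmetric polynomial structure of the coefficients of $f_{\mathbf{c}}$, then use the multiplicity relation between roots of $f_{\mathbf{c}}$ and $f'_{\mathbf{c}}$ to show that a common zero of the $G_i$ forces $f_{\mathbf{c}}$ to have at least $(d-1)+r$ roots counted with multiplicity (with $r$ the number of distinct $c_i$), which together with $f_{\mathbf{c}}(0)=0$ and $\deg f_{\mathbf{c}}=d$ forces $c_1=\cdots=c_{d-1}=0$. The paper simply asserts the multiplicity fact where you derive it by Taylor expansion, but the argument is the same.
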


\begin{proof}
The fact that $G_i$ is a homogeneous polynomial of degree $d$ follows immediately from the fact that the $i$th symmetric polynomial in $c_1, ..., c_{d-1}$ is homogeneous of degree $d-i$.  It remains to show that the $G_i$ have no common root, other than the trivial one, so we suppose that $(c_1, ..., c_{d-1})$ is some common root.  Now, if $z=c_i$ is a root of the polynomial $f_{\mathbf{c}}(z)$, then it is clearly a root of multiplicity
\begin{equation}\label{eq:root mult}1+\#\{1\leq j\leq d-1:c_j=c_i\},\end{equation}
since this is one more than the multiplicity of $z=c_i$ as a root of
\[f'_{\mathbf{c}}(z)=(z-c_1)(z-c_2)\cdots (z-c_{d-1}).\]
But for our chosen point $(c_1, ..., c_{d-1})\in \AA^{d-1}$, \emph{each} $c_i$ is a root of $f_{\mathbf{c}}(z)$, and so  by summing \eqref{eq:root mult} over distinct values of $c_i$, we see that the number of roots of $f_{\mathbf{c}}(z)$ (with multiplicity) is at least
\[\#\{c_1, ..., c_{d-1}\}+d-1.\]
The polynomial $f_{\mathbf{c}}(z)$ has degree $d$, and so it follows immediately that \[c_1=c_2=\cdots=c_{d-1},\]
and that this is the unique root of $f_{\mathbf{c}}(z)$.  However, we also have $f_{\mathbf{c}}(0)=0$, and so $c_i=0$ for all $i$.
\end{proof}

We now establish the main technical lemma in the proof of Theorem~\ref{th:main}.

\begin{lemma}\label{lem:main}
 For any $v\in M_\QQ$, there is a $\delta_v\geq 0$ such that
\begin{equation}\label{eq:main bound}\log\max\{1, |c_1|_v, ..., |c_{d-1}|_v\}\leq \lcrit{v}(f_\mathbf{c})+\delta_v\end{equation}
for all $\textbf{c}\in\AA^{d-1}(\overline{\QQ})$.
Furthermore, $\delta_v=0$ for all but finitely many $v\in M_\QQ$.
\end{lemma}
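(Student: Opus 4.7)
The plan is to combine Lemma~\ref{lem:homog} (via an effective Nullstellensatz) with the basin-of-infinity estimate of Lemma~\ref{lem:basins}.  Because the forms $G_i(\mathbf{c}) = f_\mathbf{c}(c_i)$ are homogeneous of the same degree $d$ and, by Lemma~\ref{lem:homog}, have no common projective zero, the projective Nullstellensatz supplies an integer $N$ depending only on $d$ together with homogeneous $H_{ij} \in \QQ[c_1,\ldots,c_{d-1}]$ of degree $N-d$ such that $c_j^N = \sum_i H_{ij}(\mathbf{c})\,G_i(\mathbf{c})$ for every $j$.  Setting $M = \log\max\{1, |c_1|_v, \ldots, |c_{d-1}|_v\}$, choosing $j$ to realize the max of $|c_k|_v$, and bounding both sides $v$-adically using the triangle or ultrametric inequality together with the degree of the $H_{ij}$, one extracts the key inequality
\[dM \leq \max_i \log^+|f_\mathbf{c}(c_i)|_v + L_v,\]
where $L_v$ absorbs the coefficient norms of the $H_{ij}$ (and the number of monomials at archimedean places).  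Crucially $L_v = 0$ for every non-archimedean $v$ outside the finite set of primes appearing in denominators of those coefficients.

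To finish I would split into cases based on the index $i^*$ realizing the maximum on the right.  If $|f_\mathbf{c}(c_{i^*})|_v > C_{f_\mathbf{c},v}$, then $f_\mathbf{c}(c_{i^*})$ lies in the $v$-adic basin of infinity, so Lemma~\ref{lem:basins} (applied with leading coefficient $a_d = 1/d$), together with the functional equation $\lhat_{f_\mathbf{c},v}(f_\mathbf{c}(z)) = d\,\lhat_{f_\mathbf{c},v}(z)$, yields
\[\log^+|f_\mathbf{c}(c_{i^*})|_v \;\leq\; d\,\lhat_{f_\mathbf{c},v}(c_{i^*}) + A_v \;\leq\; d\,\lcrit{v}(f_\mathbf{c}) + A_v,\]
where $A_v$ collects the corrections $\frac{1}{d-1}\log|d|_v$ and $\epsilon$ from Lemma~\ref{lem:basins}; both are non-positive at non-archimedean $v$, so $A_v = 0$ off the archimedean place.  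Substituting into the key inequality gives $M \leq \lcrit{v}(f_\mathbf{c}) + (L_v + A_v)/d$.  Otherwise $|f_\mathbf{c}(c_i)|_v \leq C_{f_\mathbf{c},v}$ for every $i$, and Lemma~\ref{lem:c_f to c-height} bounds $\max_i\log^+|f_\mathbf{c}(c_i)|_v \leq M + \xi_v$; feeding this back into the key inequality gives $(d-1)M \leq L_v + \xi_v$, so $M$ is bounded by a constant which vanishes at almost all $v$.  In either case, the claimed bound $M \leq \lcrit{v}(f_\mathbf{c}) + \delta_v$ holds with $\delta_v = 0$ off a finite set of places.

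The main technical obstacle is the Nullstellensatz step: mere existence of $N$ and $H_{ij}$ suffices for the qualitative statement that $\delta_v = 0$ for almost all $v$, but to keep the constants $C_1, C_2$ of Theorem~\ref{th:main} effective one also needs explicit control on the degree $N$ and on the heights of the coefficients of the $H_{ij}$.  This is exactly where the large effective constants of Theorem~\ref{th:main} originate.
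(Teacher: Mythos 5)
Your proposal follows essentially the same strategy as the paper's own proof: apply the Nullstellensatz to the forms $G_i$ from Lemma~\ref{lem:homog} to obtain a lower bound on $\max_i|f_\mathbf{c}(c_i)|_v$ in terms of $\max_j|c_j|_v$, then split into cases according to whether the largest $|f_\mathbf{c}(c_i)|_v$ exceeds the threshold $C_{f_\mathbf{c},v}$, using Lemma~\ref{lem:basins} and the functional equation in the first case and Lemma~\ref{lem:c_f to c-height} in the second. The bookkeeping (your $N, H_{ij}, L_v, A_v$ versus the paper's $e, F_{i,j}, B_v$, and the $\frac{1}{d-1}\log|d|_v$ and $\epsilon$ terms) is handled in the same way, so this is a faithful reconstruction of the argument.
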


\begin{proof} 
Let \[G_i(c_1, ..., c_{d-1})=f_{\mathbf{c}}(c_i)\in\QQ[c_1, ..., c_{d-1}]\]
be the homogeneous polynomials defined in Lemma~\ref{lem:homog}.
By that lemma, these polynomials have no common root other than the trivial one, 
and so we may employ a standard argument (reproduced here for the convenience of the reader) to bound the $G_i$ from below.
By Hilbert's Nullstellensatz, the radical of the ideal generated by the $G_i$ is the ideal generated by all of the $c_i$.
  In particular there are homogeneous polynomials $F_{i,j}\in\QQ[c_1, ..., c_j]$ and $e\in\ZZ$ such that
\[c_i^e=F_{i,1}(\mathbf{c})G_1(\mathbf{c})+\cdots F_{i,(d-1)}(\mathbf{c})G_{d-1}(\mathbf{c}).\]
Note that $F_{i, j}$ has degree $e-d$.
For $v\in M_\QQ$, we define $\|F_{i, j}\|_v$ to be the maximum of the $v$-adic absolute values of the coefficients of $F_{i, j}$, so that
\[|F_{i, j}(c_1, ..., c_{d-1})|_v\leq \epsilon_v\|F_{i, j}\|_v\max\{|c_1|_v, ..., |c_{d-1}|_v\}^{e-d}\]
(here $\epsilon_v=1$ if the absolute value is ultrametric, and $\epsilon_v=\binom{e+1}{d-1}$ is the number of possible monomials of degree $e-d$ if $v$ is archimedean).
It follows that
\begin{eqnarray*}
e\log|c_i|_v&\leq& \log(d-1)_v+\log\max_{1\leq j\leq d-1}\{|F_{i, j}(\mathbf{c})G_j(\mathbf{c})|_v\}\\ 
&\leq&\log(d-1)_v+\log\max \{|G_j(\mathbf{c})|_v\}+\log\max_{1\leq j\leq d-1}\{|F_{i, j}(\mathbf{c})|_v\} \\
&\leq &
 \log(d-1)_v+\log\max|G_j(\mathbf{c})|_v+(e-d)\log\max\{|c_j|_v\}\\ &&+\log\max_{1\leq j\leq d-1}\|F_{i,j}\|_v+\log\epsilon_v.\end{eqnarray*}
 Since this holds for all $i$, we have
\begin{multline*}
e\log\max\{|c_1|_v, |c_2|_v, ..., |c_{d-1}|_v\}\leq \log\max\{|G_j(\mathbf{c})|_v\}\\ + (e-d)\log\max\{|c_1|_v, |c_2|_v, ..., |c_{d-1}|_v\}+B_v,\end{multline*}
where
\[B_v=\log(d-1)_v+\log\max\|F_{i,j}\|_v+\log\epsilon_v,\]
is clearly 0 for all but finitely many $v\in M_\QQ$.

Now, recalling that $G_j(c_1, ..., c_{d-1})=f_{\mathbf{c}}(c_j)$, we have
\[d\log\max\{|c_1|_v, |c_2|_v, ..., |c_{d-1}|_v\}\leq \log\max\{|f_{\mathbf{c}}(c_j)|_v\}+B_v.\]
Either $\max\{|c_1|_v, ..., |c_{d-1}|_v\}\leq 1$, in which case \eqref{eq:main bound} holds trivially, or else
$\max\{|c_1|_v, ..., |c_{d-1}|_v\}>1$.  In the latter case, we have 
\begin{equation}\label{eq:lower}d\log\max\{1, |c_1|_v, |c_2|_v, ..., |c_{d-1}|_v\}-B_v\leq \log|f_{\mathbf{c}}(c_j)|_v,\end{equation}
for some $j$ witnessing the maximum value of $|f_{\mathbf{c}}(c_j)|_v$.  By Lemma~\ref{lem:c_f to c-height},
this gives
\begin{equation}
\log C_{f_{\mathbf{c}}, v}+(d-1)\log\max\{1, |c_1|_v, |c_2|_v, ..., |c_{d-1}|_v\}-B_v-\xi_v\leq \log |f(c_j)|_v.\end{equation}
Now, for all but finitely many non-archimedean places $v$, we have $B_v=0$ and $\xi_v=0$, and so this immediately implies
\[\log C_{f_{\mathbf{c}}, v}<\log  |f(c_j)|_v,\]
since $d\geq 2$ and since we have assumed that $\max\{|c_1|_v, ..., |c_{d-1}|_v\}>1$.  From this we obtain
\[d\log\max\{1, |c_1|_v, ..., |c_{d-1}|_v\}\leq \log|f(c_j)|_v=\lhat_{f, v}(f(c_j))\leq d\lcrit{v}(f),\]
by Lemma~\ref{lem:basins}.

The remaining cases are not particularly different.  In general, if $\log|f(c_j)|_v$ is at most $\log C_{f_{\mathbf{c}}, v}$ then \eqref{eq:lower} and Lemma~\ref{lem:c_f to c-height} give
\[d\log\max\{1, |c_1|_v, |c_2|_v, ..., |c_{d-1}|_v\}-B_v\leq \log\max\{1, c|_1|_v, ..., |c_{d-1}|_v\}+\xi_v,\]
and so \eqref{eq:main bound} holds trivially as long as
\[\delta_v\geq \frac{1}{d-1}(B_v+\xi_v).\]
If, on the other hand, we have $\log|f(c_j)|_v> \log C_{f_{\mathbf{c}}, v}$, then Lemma~\ref{lem:basins} and \eqref{eq:lower} give
\begin{eqnarray*}
d\log\max\{1, |c_1|_v, |c_2|_v, ..., |c_{d-1}|_v\}&\leq& \lhat_{f, v}(f(c_j))+\frac{1}{d-1}\log|d|_v\\&&+\left(\log \frac{3}{2}\right)_v+B_v\\&\leq& d\lcrit{v}(f)+\frac{1}{d-1}\log|d|_v\\&&+\left(\log \frac{3}{2}\right)_v+B_v.
\end{eqnarray*}
This again shows \eqref{eq:main bound} as long as \[\delta_v\geq \frac{1}{d}\left(\frac{1}{d-1}\log|d|_v+\left(\log \frac{3}{2}\right)_v+B_v\right).\]
\end{proof}

Note that Lemma~\ref{lem:main} is purely local.  If $\QQ$ were replaced by any valued field, then Lemma~\ref{lem:main} shows that the locus of post-critically bounded maps, in the $c_i$ coordinates, is a bounded subset of moduli space.  Modifying the proof of Lemma~\ref{lem:naive to c-height}, in which every step is obtained by summing local heights, we get the same for the $a_i$ coordinates.  Indeed, it is exactly this that we use to prove Corollary~\ref{cor:finite eff comp}, with the additional observation that over a number field, this bounded set is a ball of radius one for all but finitely many places.

\section{Proof of the main results}

We now prove Theorem~\ref{th:main} from the lemmas above.  The first inequality, which is the more interesting part, follows by summing \eqref{eq:main bound} of Lemma~\ref{lem:main} over all places of $\QQ$.  In particular, if $(c_1, ..., c_{d-1})\in\AA^{d-1}(E)$, for $E/\QQ$ Galois, we have
\begin{eqnarray}
h(c_1, ..., c_{d-1})&=&\frac{1}{[E:\QQ]}\sum_{\sigma\in\Gal(E/\QQ)}\sum_{v\in M_\QQ}\log\max\{1, |c_1^\sigma|_v, ..., |c_{d-1}^\sigma|_v\}\nonumber\\
&\leq&\frac{1}{[E:\QQ]}\sum_{\sigma\in\Gal(E/\QQ)}\sum_{v\in M_\QQ}\left( \lcrit{v}(f_{\mathbf{c}}^\sigma)+\delta_v\right)\nonumber\\
&=&\hcrit(f)+\sum_{v\in M_\QQ}\delta_v,\label{eq:c-height crit height}
\end{eqnarray}
an inequality which does not depend on the choice of $E$.
We can now combine \eqref{eq:c-height crit height} with Lemma~\ref{lem:naive to c-height} to obtain 
\begin{eqnarray*}
\hnaive{mc}(f)&\leq &\sum_{i=1}^{d-1}h(c_i)+dh(c_1, ..., c_{d-1})+O(1)\\
&\leq &(2d-1)h(c_1, ..., c_{d-1})+O(1)\\
&\leq& (2d-1) \hcrit (f)+O(1)
\end{eqnarray*}
where the implied constant depends only on $d$.  

The upper bound is much more elementary, and uses well-known techniques.  In particular, one can use the triangle inequality and a standard telescoping sum argument to show that for any polynomial $f(z)=\sum a_i z^i$ and any $z\in\overline{\QQ}$,
\[\hhat_{f}(z)\leq h(z)+ \frac{1}{d-1} h(a_{d}, ..., a_0)+\frac{1}{d-1}\log (d+1).\]
Estimating the coefficients of $f_{\mathbf{c}}$ as in the proof of Lemma~\ref{lem:naive to c-height}, we see that
\[\hhat_{f_{\mathbf{c}}}(z)\leq h(z)+  \frac{1}{d-1}\sum_{j=1}^{d-1}h(c_j)+O(1),\]
where the implied constant depends on $d$.
 So, it follows that
\begin{eqnarray*}
\hcrit(f_{\mathbf{c}})&\leq &\sum_{i=1}^{d-1} \left(h(c_i)+ \frac{1}{d-1}\sum_{j=1}^{d-1}h(c_j)+O(1)\right)\\
&=&2\sum_{i=1}^{d-1} h(c_i)+O(1),
\end{eqnarray*}
for any $\mathbf{c}\in \AA^{d-1}(\overline{\QQ})$.  By Lemma~\ref{lem:naive to c-height}, there is a $\mathbf{c}'$ with $f_\mathbf{c}$ affine-conjugate to $f_{\mathbf{c}'}$, and
\begin{eqnarray*}
\hcrit(f_{\mathbf{c}'})&\leq &2\sum_{i=1}^{d-1} h(c_i')+O(1)\\
&\leq &4\hnaive{mc}(f_{\mathbf{c}'})+O(1)
\end{eqnarray*}
and, since both $\hcrit$ and $\hnaive{mc}$ are well-defined on conjugacy classes, we have
\[\hcrit(f_\mathbf{c})\leq 4\hnaive{mc}(f_\mathbf{c})+O(1).\]
This concludes the proof of Theorem~\ref{th:main}.


Corollary~\ref{cor:finite eff comp} follows almost immediately from this.  If 
\[f(z)=a_dz^d+a_{d-1}z^{d-1}+\cdots+a_0\in\overline{\QQ}[z]\]
with $[\QQ(a_d, ..., a_0):\QQ]\leq B$, then $f$ is affine-conjugate to some monic, centred polynomial $g(z)$ with coefficients in $\QQ(a_d^{1/(d-1)}, ..., a_0)$.
  If $f(z)$ is post-critically finite, then so is $g(z)$, and hence by Theorem~\ref{th:main}  the coefficients of $g(z)$ lie in a set of bounded height, and algebraic degree at most $(d-1)B$, in $\AA^{d-1}(\overline{\QQ})$, which must be a finite, effectively computable set.  The one subtlety is that one may effectively decide which of the polynomials with coefficients in this set is actually post-critically finite, which amounts to finding an upper bound on the orbit size of a critical point of such a polynomial.  But the critical points themselves will be algebraic numbers of degree at most $(d-1)^2B$, and will also be contained in a set of bounded height, and so the finiteness of this set gives an effectively computable upper bound on the possible orbit sizes of these critical points.  This gives an effective algorithm for deciding which of these polynomials is actually post-critically finite.

To prove Corollary~\ref{cor: all pcf maps are algebraic}, we suppose otherwise.  Since the points in $\mathcal{P}_d(\CC)$ corresponding to post-critically finite polynomials are clearly contained in the union of countably many $\QQ$-rational affine subvarieties, defined by the different possible orbit types of the critical points, we suppose that one of these subvarieties $V$ contains a transcendental point.  It follows that $V$ contains a curve defined over some number field $L$, say, since there is a map $\QQ(V)\to\CC$ which doesn't factor through a map to $\overline{\QQ}$, and so there is a surjective map from $\QQ(V)$ to a ring with transcendence rank one over $\QQ$, which we may take to be the function field of a curve $X\subseteq V$ defined over a number field $L$.
This curve admits a non-constant map to $\PP^1$ of some degree $D$, and pulling back the $L$-rational points on $\PP^1$, we obtain infinitely many points on $X(\overline{\QQ})\subseteq V(\overline{\QQ})$ of algebraic degree at most $D[L:\QQ]$, all of which correspond to post-critically finite polynomials.  This contradicts Corollary~\ref{cor:finite eff comp}, and so it must be the case that all of the varieties $V$ are $0$-dimensional, and hence all of the points in $V(\CC)$ algebraic.

\section{Computations and examples}

Theorem~\ref{th:main} can be made completely effective, for example by invoking the effective version of the Nullstellensatz due to Masser and W\"{u}sthotlz \cite{mw}, but the resulting constants (which depend on $d$) are far too large to be of computational use.  Since it is the effective Nullstellensatz that is the limiting component of the argument, we have not made any effort of optimize the remaining estimates in this paper.  Instead, we will use the ideas of Theorem~\ref{th:main}, rather than the proof itself, to list the post-critically finite monic cubic polynomials with coefficients in $\QQ$.

One can check rather easily that if $c\in\QQ$ and $z^2+c$ is post-critically finite, then $c\in\{0, -1, -2\}$.  We claim that if $f(z)=z^3+Az+B$ has coefficients in $\QQ$ and is post-critically finite, then
\[(A, B)\in \left\{\Big(-3, 0\Big), \Big(-\frac{3}{2}, 0\Big), \Big(-\frac{3}{4}, \frac{3}{4}\Big), \Big(-\frac{3}{4},-\frac{3}{4}\Big),\Big(0, 0\Big), \Big(\frac{3}{2}, 0\Big), \Big(3, 0\Big)\right\}.\]
The proof of this is based on the proof of Theorem~\ref{th:main}, but is much more efficient as we can replace the Nullstellensatz with an explicit elimination.  We will suppose that all of the absolute values on $\QQ$ have been extended in some way to $\overline{\QQ}$, that $A, B\in \QQ$, and that we have chosen $\alpha, \beta\in \overline{\QQ}$ with $A=-3\alpha^2$ and $B=\beta^3$.  Note that the critical points of $f(z)$ are $z=\pm\alpha$.  

One checks that we may take
\[C^*_{f, v}=(2)_v\max\{1, |A|^{1/2}_v, |B|^{1/3}_v\},\]
and obtain  that $z$ is preperiodic only if $|z|_v\leq C^*_{f, v}$ for all $v\in M_{\QQ}$.  Note that this is a slight improvement on the relevant case of Lemma~\ref{lem:basins}, since $C_{f, \infty}=3C^*_{f, \infty}$.  In particular, at the archimedean place, the condition $|z|>C^*_{f, \infty}$ gives
\[|Az+B|< \frac{1}{4}|z|^3+\frac{1}{8}|z|^3<\frac{1}{2}|z|^3,\]
whereupon $|f(z)|\geq \frac{1}{2}|z|^3$.  This is enough to ensure $|f(z)|\geq C^*_{f, \infty}$ and, by induction,
\[3^{-N}\log|f^N(z)|\geq \log|z|-\frac{1-3^{-N}}{2}\log 2.\]
Taking $N\to \infty$, we obtain
\[\hhat_f(z)\geq\lhat_{f, \infty}(z)> \frac{1}{2}\log 2.\]
At the non-archimedean places, the condition $|z|_v>C^*_{f, v}$ implies $|Az+B|_v<|z|_v^3$, and so $|f(z)|_v=|z|_v^3$.  By induction, we obtain in this case $\lhat_{f, v}(z)=\log|z|_v>0$.  So $f(z)=z^3-3\alpha^2z+\beta^3$ is postcritically finite only if \[|f^N(\pm \alpha)|_v\leq\log C^*_{f, v}\] for all $N$ and all $v\in M_\QQ$.

Now, suppose that $v\nmid \infty$ and $v\nmid 6$.  We have
\[C^*_{f, v}=\max\{1, |\alpha|_v, |\beta|_v\}\]
and we can only have $f(z)$ post-critically finite (indeed, $v$-adically post-critically bounded) if
\[\max\{|f(\alpha)|_v, |f(-\alpha)|_v\}\leq C^*_{f, v}=\max\{1, |\alpha|_v, |\beta|_v\}.\]
Since $f(\alpha)+f(-\alpha)=2\beta^3$ and $f(\alpha)-f(-\alpha)=4\alpha^3$, the ultrametric inequality yields
\[\max\{|\alpha|_v^3, |\beta|_v^3\}\leq \max \{|f(\alpha)|_v, |f(-\alpha)|_v\}\leq \max\{1, |\alpha|_v, |\beta|_v\},\]
which is impossible unless $\max\{|\alpha|_v, |\beta|_v\}\leq 1$.  We have shown that $\alpha$ and $\beta$, and hence $A$ and $B$, are integral except possibly at places above $6$.
  The argument at $p\mid 6$ is nearly identical, and show that $4A, 8B\in \ZZ$.  The triangle inequality gives a similar estimate for the archimedean absolute value on $\QQ$, which turns out to yield 
\[|A|\leq 3^{3/2}\qquad |B|\leq  3^{9/4}.\]

In other words, if $z^3+Az+B$ is post-critically finite, then it is the case that $A=\frac{a}{4}$ for some $a\in\{-20, -19, ..., 20\}$ and $B=\frac{b}{8}$ for some $b\in \{-94, -33, ..., 94\}$.
Furthermore, since $z^3+Az+B$ is affine-conjugate to $z^3+Az-B$, we need only treat positive values of $B$, leaving just 3895 cubic polynomials to consider.  A quick computation in Pari shows that of these, all but 86 have $|f^N(\pm\alpha)|_\infty>C^*_{f, \infty}$ for some $N\leq 14$, which is enough to ensure that the critical point is not preperiodic.  Of the remaining 86, all but those listed above have a critical point which escapes 2-adically, a fact which may be observed on a case-by-case basis (the author used Maple to check this).


\begin{thebibliography}{9}

\bibitem{baker} M.~Baker, A finiteness theorem for canonical heights attached to rational maps over function fields, \emph{J.\ Reine Angew.\ Math.} \textbf{626} (2009), pp.~205--233.

\bibitem{call-silv} G.~S.~Call and J.~H.~Silverman, Canonical heights on varieties with morphisms, \emph{Compositio
Math.} \textbf{89} (1993), pp.~163--205.

\bibitem{demarco} L.~DeMarco Dynamics of rational maps: {L}yapunov exponents, bifurcations, and metrics on the sphere, \emph{Math.\ Ann.} \textbf{326} (2003), pp.~43--73


\bibitem{epstein} A.~L.~Epstein, Integrality and rigidity for postcritically finite polynomials, arXiv:1010.2780.

\bibitem{adslang} P.~Ingram, Lower bounds on the canonical height associated to the morphism $\phi(z)= z^d+c$, \emph{Monatsh.\ Math.} \text{157} (2009), pp.~69--89.

\bibitem{pi:spec} P.~Ingram, Variation of the canonical height for a family of polynomials, arXiv:1003.4225.

\bibitem{mw} D.~Masser and G.~W\"{u}stholz, Fields of large transcendence degree generated by values of elliptic functions, \emph{Invent.\ Math} \textbf{72} (1983), pp.~407--464.

\bibitem{rosser} J.~B.~Rosser and L.~Schoenfeld, Approximate formulas for some functions of prime numbers, \emph{Illinois Journal Math.} \textbf{6} (1962), pp.~64--94.

\bibitem{jhs:aec} J.~H.~Silverman, \emph{The Arithmetic of Elliptic Curves}, volume 106 of \emph{Graduate Texts in Mathematics}. Springer, 1986. 

\bibitem{js:ads} J.~H.~Silverman, \emph{The Arithmetic of Dynamical Systems}, volume 241 of \emph{Graduate Texts in Mathematics}. Springer, 2007. 

\bibitem{thurston} A.~Douady and J.~Hubbard, A proof of Thurston's topological characterization of rational functions, \emph{Acta Math.} \textbf{171} (1993), pp.~263--297.
\end{thebibliography}
\end{document}